\def\ps@pprintTitle{%
   \let\@oddhead\@empty
   \let\@evenhead\@empty
   \let\@oddfoot\@empty
   \let\@evenfoot\@oddfoot
}
\def\q{{\mathcal Q}}
\def\u{\mathrm u}
\def\p{{\mathcal P}}
\def\I{{\mathcal I}}
\newcommand{\R}{{\mathbb{R}}}
\newcommand{\Z}{{\mathbb{Z}}}
\def\ddj{\dot \Delta_j}
\def\div{ \hbox{\rm div}\,  }
\newtheorem{theorem}{Theorem}[section]
\newtheorem{lemma}[theorem]{Lemma}
\newtheorem{remark}[theorem]{Remark}
\numberwithin{equation}{section}
\def\f{\frac}
\def\u{\bm{u}}
\let\wt=\widetilde
\begin{document}
\title{  Global large solutions  to the two dimensional compressible Navier-Stokes equations}

  \author{Xiaoping Zhai}
\ead{zhaixp@szu.edu.cn}

 \author{Zhi-Min Chen}
 \ead{zmchen@szu.edu.cn}
 \address{School  of Mathematics and Statistics, Shenzhen University, Shenzhen 518060, China}

\baselineskip=24pt

\begin{abstract}
We obtain the
global   large solutions to the compressible Navier-Stokes equations in $\mathbb{R}^2$. The  solution is large in the sense that there is no smallness assumption applied to  one component of the initial incompressible velocity.
\end{abstract}

\begin{keyword}
Compressible Navier-Stokes equations; Global large solutions; Littlewood-Paley theory
\end{keyword}

\maketitle

\noindent { Mathematics Subject Classification (2010)}:~~35Q35, 76N10, 35B40

\section{ Introduction and the main result}
The present paper is dedicated to the global wellposedness for the following compressible Navier-Stokes equations in $\R^2 $:
\begin{eqnarray}\label{mm}
\left\{\begin{aligned}
&\partial_t \rho + \div(\rho \u) = 0\, ,\\
 &     \partial_t ( \rho \u ) + \div ( \rho \u \otimes \u ) -\mu\Delta \u-(\lambda+\mu)\nabla\div \u+ \nabla P(\rho) =0,
 \\
 &\rho|_{t=0}=\rho_0, \,\, u|_{t=0}=\u_0\stackrel{\mathrm{def}}{=}(u^1_0,u^2_0),
\end{aligned}\right.
\end{eqnarray}
where
$\rho$  is the  density, $\u$  is the  velocity field,  $P(\rho)\in C^1$ is  the pressure field, which satisfies that   $P'(\rho)>0$ and
$P'(1)=1$.
The parameters  $\mu$ and $\lambda$   are  shear viscosity  and volume  viscosity coefficients respectively.

As one of the most popular fluid motion model in the field of the analysis and applications,  the compressible Navier-Stokes equations system  has attracted much attention and there is  a large  literature important to mathematical analysis and fluid mechanics.
One may mention in particular the works by Matsumura and Nishida  \cite{mat}, Xin \cite{Xin}, Lions \cite{lions+1}, Danchin \cite{danchin2000},  Hoff \cite{hoff1995},   Feireisl \cite{Feireisl1}, Feireisl {\it et al.}  \cite{ferisal} --\cite{Feireisl4}, Villani \cite{villani}, Chen {\it et.al} \cite{chenqionglei},  Charve and Danchin \cite{charve}, Haspot \cite{haspot},  Huang {\it et al.}
 \cite{huangxiangdi}, Danchin \cite{danchin2014}, \cite{danchin2018},   Kotschote \cite{Kotschote},  He {\it et.al} \cite{huangjingchi2}.
To summarize,
the most of  the previous global well-posedness results were established
by assuming that the density is close in some sense to a constant state and the initial velocity is  assumed to be small.
Here, we only recall part of  those results  in the critical Besov spaces.
By  studying the  behaviors of a hyperbolic-parabolic system, Danchin \cite{danchin2000} constructed the global small solutions of \eqref{m} in $L^2$ type Besov space.
With the aid of Green matrix,
  Chen {\it et al.} \cite{chenqionglei} and Charve, Danchin \cite{charve} obtained the global well-posedness result in the critical $L^p$
framework respectively.
 Later, Haspot \cite{haspot} gave a new proof via the so called  effective velocity.
 Wang {\it et al.} \cite{wangchao}
proved the global well-posedness of three dimensional compressible Navier-
Stokes equations for some classes of large initial data, which may have large oscillation
for the density and large energy for the velocity.
Recently,
Based on the dispersion property of acoustic waves,
Fang {\it et al.} \cite{fangdaoyuan} established the global strong solutions to \eqref{mm} in $\R^3$ which allows
 the low frequency part of the initial velocity field be large.
 He {\it et al.} \cite{huangjingchi2} also obtained the global solutions to \eqref{mm} in $\R^3$ with
 the large vertical component of the incompressible part of the initial velocity.
Very recently,  Danchin and Mucha \cite{danchin2018} proved the global solutions to \eqref{mm}  in $\R^d\,(d\ge2)$ with large initial velocity and almost constant density, if the  volume viscosity is large enough. This result was further extended by  Chen and Zhai \cite{zhaixiaoping} in a critical $L^p$  framework. It should be mentioned that there is no smallness restriction on  the incompressible part of the initial velocity in \cite{zhaixiaoping}  and \cite{danchin2018} in $\R^2.$ The proof of  \cite{zhaixiaoping}  and \cite{danchin2018} is based on the  global wellposedness for the two dimensional classical Navier-Stokes equations and  large  volume viscosity.

A natural question which arises is: when the volume viscosity is fixed, whether we can construct global solutions to the compressible Navier-Stokes equations in $\R^2$, without smallness restriction on  the incompressible part of the initial velocity? As far as we know, it's still an interesting open problem.  The main difficulty lie in that there has one bad term $\q(\p \u\cdot\nabla\u)$ (see the definition of $\p, \q$ in \eqref{touying}) in the equation of the compressible  part (see the second equation of \eqref{m3}). We cannot use perturbation method to disappear this term.
As an attempt, we  construct global solutions to the  compressible Navier-Stokes equations in $\R^2$ with the large  component of the  incompressible velocity with fixed   volume viscosity.
This is significant different to   \cite{zhaixiaoping} and \cite{danchin2018}  assuming the volume  viscosity being   sufficiently large.  The method used  here relies heavily on the algebraical
structure of the equation for the incompressible  part (see \eqref{m2}), that is to say, the equation of each component for the incompressible velocity  is a linear equation with coefficients depending on the density, the velocity of the compressible part and the other component of the incompressible velocity. Similar ideas have been used  in \cite{huangjingchi}, \cite{paicu2012} for inhomogeneous incompressible fluids. In the compressible case, it is much more complicated to deal with extra terms involved in $\q\u$.
For simplicity of notation,  we use the viscosity coefficient values  $ \mu=1 \,\,\,\mbox{ and } \lambda =0$
throughout the paper.


Because we shall seemingly consider the density functions as perturbations of the reference density 1, it's natural to set
   $a=\rho-1$, so that  system \eqref{mm} translates into
\begin{eqnarray}\label{m}
\left\{\begin{aligned}
&\partial_t a + \div  \u+ \div(a \u) = 0\, ,\\
 &     \partial_t \u  + \u\cdot \nabla \u -\Delta \u-\nabla\div  \u+ \nabla a = -L(a)\Delta \u- L(a)\nabla\div  \u
  +k(a)\nabla a
  ,\\
    &(a,\u)|_{t=0}=(a_0,\u_0),
\end{aligned}\right.
\end{eqnarray}
where
$$L(a)\stackrel{\mathrm{def}}{=}\frac{a}{1+a},\quad  k(a)\stackrel{\mathrm{def}}{=}-\frac{P'(1+a)}{1+a}+P'(1).$$
The present study lies on the  homogeneous Littlewood-Paley decomposition   $$z= \sum_{j \in \Z}\dot \Delta_j z \in\mathcal{S}'(\R^2),$$ which can be  truncated into  lower and higher oscillation parts expressed as
\begin{align}\label{eq:lhf0}
z^\ell\stackrel{\mathrm{def}}{=}\sum_{2^j\leq N_0}\ddj z\quad\hbox{and}\quad
z^h\stackrel{\mathrm{def}}{=}\sum_{2^   j>N_0}\ddj z
\end{align}
for a large  integer $N_0\ge 1.$  

In  the following, we  use the Leray projection operators
\begin{align}\label{touying}
\q =\nabla \Delta^{-1} \div,\,\,\, \p =\I-\q
\end{align}
 and  vector components
$$
v\stackrel{\mathrm{def}}{=}J_1(\p\u),\quad w\stackrel{\mathrm{def}}{=}J_2(\p\u),\quad v_0\stackrel{\mathrm{def}}{=}J_1(\p\u_0),\quad w_0\stackrel{\mathrm{def}}{=}J_2(\p\u_0),$$
with respect to $J_1\mathbf{M}=M^1$ is the first component of $\mathbf{M}=(M^1,M^2)$ and $J_2\mathbf{M}$ is the second component of $\mathbf{M}=(M^1,M^2).$

We are now in the position to state the main result of the present paper:
 \begin{theorem}\label{dingli}
Let   $a_0 \in \dot B^{0}_{2,1}\cap\dot B^{1}_{2,1}(\R^2), \u_0\in {\dot B^{0}_{2,1}}(\R^2)$.
Assume that there exist two positive constants $c_0$ and $C_0$ such that
\begin{align}\label{tiaojian}
&\|a_0 \|_{\dot B^{0}_{2,1}\cap\dot B^{1}_{2,1}}+\|(\q \u_0,{v}_0)\|_{\dot B^{0}_{2,1}}
\leq c_0\exp (-C_0(\|a_0 \|_{\dot B^{0}_{2,1}\cap\dot B^{1}_{2,1}}+\|(\q\u_0,{v}_0,{w}_0)\|_{\dot B^{0}_{2,1}})^2),
\end{align}
then the system \eqref{m}  admits a unique global
solution $(a,\u)$ satisfying,  for $t>0,$
\begin{align}\label{wrt1}
&\|(a,\q\u,{v})\|_{\wt L^\infty_t(\dot B^{0}_{2,1})}+\|a\|_{\wt L^\infty_t(\dot B^{1}_{2,1})}+\|a^h\|_{L^1_t(\dot B^{1}_{2,1})}
+\|(a^\ell,\q\u,{v})\|_{L^1_t(\dot B^{2}_{2,1})}\nonumber\\
&\quad\le C
(\|(a_0,\q\u_0,{v}_0)\|_{\dot B^{0}_{2,1}}+ \|a_0\|_{\dot B^{1}_{2,1}})
\exp (\|(a_0,\q\u_0,{v}_0,{w}_0)\|_{\dot B^{0}_{2,1}}+ \|a_0\|_{\dot B^{1}_{2,1}})^2,
\end{align}
and
\begin{align}\label{wrt2}
&\|{w}\|_{\wt L^\infty_t(\dot B^{0}_{2,1})}+\|{w}\|_{L^1_t(\dot B^{2}_{2,1})}\le C(\|(a_0,\q\u_0,{v}_0,{w}_0)\|_{\dot B^{0}_{2,1}}+ \|a_0\|_{\dot B^{1}_{2,1}})
\end{align}
for a  constant $C$.
  \end{theorem}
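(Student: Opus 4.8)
\noindent\emph{Strategy of the proof.} The argument is a continuation scheme resting on global-in-time a priori estimates. Local existence and uniqueness of $(a,\u)$ in the critical Littlewood-Paley framework follows from a standard Friedrichs regularisation together with a contraction argument (cf.\ \cite{danchin2000,charve,haspot}), so the whole point is to propagate \eqref{wrt1}--\eqref{wrt2} on the maximal interval of existence and to conclude that this interval is $[0,\infty)$.

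The first task is to record the equations satisfied by the three ``small'' unknowns $a,\q\u,v$ and by the ``large'' unknown $w$. Applying $\q$ to the momentum equation in \eqref{m} produces a Lamé-acoustic (hyperbolic-parabolic) system for $(a,\q\u)$ with a quadratic right-hand side together with the single genuinely compressible contribution $\q(\p\u\cdot\nabla\u)$, while applying $\p$ and reading off components gives heat equations for $v$ and for $w$. The decisive algebraic point, which I would make precise here, is that in dimension two the constraint $\partial_1 v+\partial_2 w=0$ together with $\div\u=\div\q\u$ allows every quadratic interaction involving $\p\u$ to be rewritten so that each occurrence of $w$ carries one of the small quantities $a,\q\u,v$ or their first derivatives as a factor: the two components of $\p\u\cdot\nabla\p\u$ are of the form ``$w\,\partial(\mathrm{small})$'' plus ``$(\mathrm{small})\,\partial w$'', and \emph{no $w^2$ term appears anywhere}, including in the extra terms carried along by $\q\u$. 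Consequently the $w$-equation is, up to Riesz operators harmless on the Besov scale, a linear transport-diffusion equation whose drift and forcing are controlled by the small quantities, whereas in the $(a,\q\u,v)$-system the terms containing $w$ are either products of $w$ (or $\nabla w$) with a small quantity, or advective terms transporting $v$ by $w$. Establishing these identities cleanly is the first real obstacle.

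I would then run two coupled families of estimates on the maximal time interval, writing $\mathcal A(t)$ for the left-hand side of \eqref{wrt1} and $\mathcal W(t)$ for the left-hand side of \eqref{wrt2}. For $w$: the $\dot B^{0}_{2,1}$ heat estimate with transport and commutators, applied to a linear equation every term of which carries a small factor, gives $\mathcal W(t)\le C\bigl(\|w_0\|_{\dot B^{0}_{2,1}}+\mathcal A(t)\bigr)$, which reduces \eqref{wrt2} to the bound on $\mathcal A$. For $(a,\q\u,v)$: I would combine the Danchin-type low/high-frequency estimate for the Lamé-acoustic block for $(a,\q\u)$ — precisely the reason why $a^\ell$ gains two derivatives in $L^1_t$ while $a^h$ gains only one — with a heat estimate for $v$. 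The nonlinear terms split into products of two small quantities, bounded by $\mathcal A(t)^{2}$; products of a small quantity with $w$ or $\nabla w$, bounded by $C\,\mathcal A(t)\,\|w\|_{L^2_t(\dot B^{1}_{2,1})}\le C\,\mathcal A(t)\,\mathcal W(t)$; and the advective term transporting $v$ by $w$, which after
\begin{align*}
\|w\,\partial_2 v\|_{\dot B^{0}_{2,1}}\lesssim\|w\|_{\dot B^{1}_{2,1}}\|v\|_{\dot B^{1}_{2,1}}\lesssim\|w\|_{\dot B^{1}_{2,1}}\,\|v\|_{\dot B^{0}_{2,1}}^{1/2}\,\|v\|_{\dot B^{2}_{2,1}}^{1/2}
\end{align*}
and absorption of the $\dot B^{2}_{2,1}$-factor into the left-hand side produces, through Gronwall's lemma, the factor $\exp\!\bigl(C\!\int_0^t\|w\|_{\dot B^{1}_{2,1}}^{2}\,d\tau\bigr)\le\exp\!\bigl(C\,\mathcal W(t)^{2}\bigr)$, since $\int_0^t\|w\|_{\dot B^{1}_{2,1}}^{2}\,d\tau\le\|w\|_{\wt L^\infty_t(\dot B^{0}_{2,1})}\|w\|_{L^1_t(\dot B^{2}_{2,1})}$. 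Collecting everything yields a closed inequality of the schematic shape
\begin{align*}
\mathcal A(t)\le C\bigl(\|(a_0,\q\u_0,v_0)\|_{\dot B^{0}_{2,1}}+\|a_0\|_{\dot B^{1}_{2,1}}+\mathcal A(t)^{2}+\mathcal A(t)\,\mathcal W(t)\bigr)\exp\!\bigl(C\,\mathcal W(t)^{2}\bigr),
\end{align*}
which, together with $\mathcal W(t)\le C\bigl(\|w_0\|_{\dot B^{0}_{2,1}}+\mathcal A(t)\bigr)$, is calibrated so that the exponential smallness required in \eqref{tiaojian} lets one recover the bootstrap bounds on $\mathcal A$ and $\mathcal W$ with strictly better constants; a continuity argument then closes \eqref{wrt1}--\eqref{wrt2} on the whole maximal interval, and the continuation criterion of the local theory upgrades the solution to a global one. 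Uniqueness is obtained separately, by estimating the difference of two solutions one derivative below, in the spirit of \cite{danchin2000,charve}.

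The step I expect to be hardest is this last one: one must push the low/high-frequency analysis through the \emph{coupled} $(a,\q\u)$ system without ever paying more than a single power of the large quantity $\mathcal W(t)$ outside the one exponential $\exp(C\mathcal W(t)^{2})$, and one must ensure that the quantity $\int_0^t\|w\|_{\dot B^{1}_{2,1}}^{2}\,d\tau$ driving that exponential stays only ``linearly large'' — bounded by $\mathcal W(t)^{2}$ — rather than becoming exponential itself; it is precisely this balance that dictates the exact form of the smallness condition \eqref{tiaojian}.
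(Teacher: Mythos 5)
Your proposal is correct and follows essentially the same strategy as the paper: the same $(a,\q\u,v)$-versus-$w$ splitting, the same decisive algebraic observation that the divergence-free constraint $\partial_1 v+\partial_2 w=0$ eliminates any pure $w^2$ interaction in both the $\q\u$-equation and the $w$-equation, the same low/high-frequency Lamé-acoustic block for $(a,\q\u)$ coupled to heat estimates for $v$ and $w$, and the same interpolation-plus-Gronwall closure driven by $\int_0^t\|w\|_{\dot B^1_{2,1}}^2\,d\tau\lesssim\|w\|_{\wt L^\infty_t(\dot B^0_{2,1})}\|w\|_{L^1_t(\dot B^2_{2,1})}$ inside a bootstrap. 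The only cosmetic difference is that the paper proves the $w$-bound directly under the bootstrap smallness of the $(a,\q\u,v)$ block, whereas you state the equivalent unconditional inequality $\mathcal W\lesssim\|w_0\|+\mathcal A$; both close the continuation argument identically.
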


\begin{remark}
In \eqref{tiaojian}, there is no smallness restriction on  $w_0$, thus,
the above theorem improves the results of   \cite{chenqionglei} $ (p=2)$  and \cite{danchin2000} in $\R^2$.
\end{remark}

\section{Preliminaries } Denote  by $C$  a generic constant, which may vary from line to line.
Let us first recall  the theory of the Littlewood-Paley decomposition.
 The symbol $\mathcal{F}$ represents  the Fourier transform in $\mathcal{S}'(\R^2)$, the space of tempered distributions in $\R^2$. Let $\varphi$ be a nonnegative smooth function supported in an annulus  of $\R^2$ so that
\begin{align*}
 \sum_{j \in \mathbb{Z}} \varphi_j(\cdot)=1 \ \mbox{ in } \ \R^2\setminus \{0\} \ \mbox{ for } \,\,\varphi_j(\cdot)= \varphi(2^{-j}\cdot).
 \end{align*}
Therefore the homogeneous  dyadic blocks are defined as
$$\dot{\Delta}_ju=\mathcal{F}^{-1}( \varphi_j\mathcal{F}u).
$$
Denote by $\mathcal{S}_h^{'}({\mathbb R} ^2)$ the subspace of $u\in \mathcal{S}'(\R^2)$ such that
 the homogenous decomposition
$
u=\sum_{ j\in \mathbb{Z} }\dot{ \Delta }_{j}u
$
holds true. Hence we have the
homogeneous Besov space
 $$\dot{B}_{2,1}^s(\mathbb{R}^2)=\left\{u\in \mathscr{S}'_h(\mathbb{R}^2)\left|\, \|u\|_{\dot{B}_{2,1}^s}\stackrel{\mathrm{def}}{=} \sum_{j\in\Z}2^{js}\|\dot{\Delta}_ju\|_{L^2}<\infty\right.\right\} \mbox{ for } s \in \mathbb{R}.$$

For convenience, we use the symbols
\begin{align*}
&L^p_{T}(\dot{B}_{2,1}^s(\mathbb{R}^2))\stackrel{\mathrm{def}}{=} L^p(0,T; \dot{B}_{2,1}^s(\mathbb{R}^2)),\\
&{\widetilde{L}^p_{T}(\dot{B}_{2,1}^s(\mathbb{R}^2))}\stackrel{\mathrm{def}}{=} \left\{u: [0,T ]\mapsto \mathcal{S}'(\mathbb{R}^2)\left|\,
\|u\|_{\widetilde{L}^p_{T}(\dot{B}_{2,1}^s)}\stackrel{\mathrm{def}}{=} \sum_{j\in \mathbb{Z}}2^{js}
\|\dot{\Delta}_ju\|_{L^p(0,T;L^2(\R^2))}<\infty\right.\right\}.
\end{align*}


We will also repeatedly use the  following Bernstein inequality:
\begin{align}
&& C^{-1}\sigma ^k\|u\|_{L^q}\le\|\nabla^k u\|_{L^q}
\le C\sigma ^{k+\frac 2p-\frac 2q}\|u\|_{L^p} \mbox{ when } \mathrm{Supp} \,\mathcal{F}u\subset\sigma  \mathcal{C}\label{B2222}
\end{align}
for $1\le p \le q\le \infty$, $k\in \Z$,   $\mathcal{C}$ an annulus  of $\mathbb{R}^2$ and a constant $C$  independent of the scale parameter $\sigma >0$.

Moreover, we will use the following pointwise product law \cite[Lemma 2.7]{zhaixiaoping}
\begin{align}\label{daishu}
\|uv\|_{\dot{B}_{2,1}^{s_1+s_2 -1}}\lesssim \|u\|_{\dot{B}_{2,1}^{s_1}}\|v\|_{\dot{B}_{2,1}^{s_2}},\,\,\, s_1\leq 1, \,\, \,s_2\leq 1,\,\,\,s_1+s_2>0
\end{align}
and the  estimate \cite[Remark 2.102]{bcd}
\begin{align}\label{jiaohuanzi}
\sum_{j\in \mathbb{Z}}2^{js}\left\|[u\cdot \nabla ,\dot{\Delta}_j]v\right\|_{L^2}\lesssim  \|\nabla u\|_{\dot{B}_{2,1}^{1}}\|v\|_{\dot{B}_{2,1}^{s}}, \  s= 0,1
\end{align}
for the commutator $[u\cdot \nabla ,\dot{\Delta}_j]v= u\cdot \nabla \dot\Delta_jv - \dot\Delta_j (u\cdot \nabla v)$.
Here and in what follows, $a\lesssim b$ means the inequality  $a \le C b$ for a generic constant $C$.

Finally, we recall a composition estimate  \cite[Theorem 2.61]{bcd}
\begin{align}\label{fuhe}
\|F(f)\|_{\dot B^{s}_{2,1}}\lesssim\|f\|_{\dot B^s_{2,1}}, \,\,\, s>0,
\end{align}
 where  $F$ with $F(0)=0$ is  a smooth function defined on an open interval $I$ containing~$0$  and
 $f$  is valued in a bounded interval $A\subset I.$


\section{Proof of the main theorem}

The global solution is to be obtained by extending existing local solution with respect to time.

Given $a_0\in \dot{B}^{0}_{2,1}\cap\dot{B}^{1}_{2,1}(\R^2)$, $ \u_0\in
\dot{B}^{0}_{2,1}(\R^2)$ with  $\|a_0\|_{\dot{B}^{0}_{2,1}\cap\dot{B}^{1}_{2,1}}$ being sufficiently small, it follows from    \cite{danchin2014} that there exists a positive time $T$ so that \eqref{m} has a unique local solution $(a,\u)$ with
\begin{align}\label{local}
&a\in{C}((0,T]; \dot{B}^{0}_{2,1}\cap\dot{B}^{1}_{2,1}),\quad
 \u\in C((0,T];\dot{B}^{0}_{2,1})\cap L^1(0,T;\dot{B}^{2}_{2,1}).
\end{align}
Denote $T^*$ to be the largest time $T$ in  \eqref{local}. Hence to prove Theorem \ref{dingli}, we
only need to prove that $T^*=\infty$.
To do so, we need to produce  $a$ $priori$ estimates to the solution.

By using operators $\p$ and $\q$, we can decompose  the system \eqref{m} into two subsystems:
\begin{eqnarray}\label{m2}
\left\{\begin{aligned}
 & \partial_t{v}  +\u\cdot \nabla {v} + J_1([\p,\u\cdot \nabla] \u) -\Delta {v} = -J_1(\p(L(a)\Delta \u+ L(a)\nabla\div  \u)),\\
 &\partial_t{w}  +\u\cdot \nabla {w} + J_2([\p,\u\cdot \nabla] \u) -\Delta {w} = -J_2(\p(L(a)\Delta \u+ L(a)\nabla\div  \u)),\\
\end{aligned}\right.
\end{eqnarray}
and
\begin{eqnarray}\label{m3}
\left\{\begin{aligned}
&\partial_t a+\u\cdot \nabla a +\div \q \u=-a \div  \u,\\
&\partial_t \q \u +\u\cdot \nabla \q \u-\Delta \q \u-\nabla \div \q \u+\nabla a= \q G,
\end{aligned}\right.
\end{eqnarray}
with
 \begin{align*}
  \q G=&-[\q ,\u\cdot \nabla] \u-\q (L(a)\Delta \u)-\q ( L(a)\nabla\div  \u)+\q (k(a)\nabla a).
  \end{align*}

\begin{lemma}\label{zhongyao}
Let $(a,\u)$ be the global smooth solution to \eqref{m}, then there holds the following inequality:
\begin{align*}
\| \q G\|_{\dot B^{0}_{2,1}}
\lesssim&\|{v}\|_{\dot B^{1}_{2,1}}\|{w}\|_{\dot B^{1}_{2,1}}\nonumber\\
&+(\|(a,{v},\q \u)\|_{\dot B^{0}_{2,1}}+\|a\|_{\dot B^{1}_{2,1}} )(\|a^h\|_{\dot B^{1}_{2,1}}+\|(a^\ell,{v},{w},\q \u)\|_{\dot B^{2}_{2,1}}).
\end{align*}

\end{lemma}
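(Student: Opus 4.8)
The plan is to estimate each of the four pieces of $\q G$ separately in $\dot B^{0}_{2,1}$, using the product law \eqref{daishu}, the commutator estimate \eqref{jiaohuanzi}, and the composition estimate \eqref{fuhe}, together with the decomposition $a = a^\ell + a^h$ into low and high frequencies. The most delicate term is the convection commutator $[\q,\u\cdot\nabla]\u$, because this is exactly where the ``bad term'' $\q(\p\u\cdot\nabla\u)$ hides, and it is the source of the product $\|v\|_{\dot B^{1}_{2,1}}\|w\|_{\dot B^{1}_{2,1}}$ on the right-hand side. Writing $\u = \q\u + \p\u$ and $\p\u = (v,w)$, one expands $[\q,\u\cdot\nabla]\u$ bilinearly; the terms involving at least one factor of $\q\u$ are harmless and get absorbed into the big parenthetical factor (using $\|\q\u\|_{\dot B^{0}_{2,1}}$ against $\|\q\u\|_{\dot B^{2}_{2,1}}$ or $\|(v,w)\|_{\dot B^{2}_{2,1}}$ via the product law with $s_1 = 0$, $s_2 = 2$ — note $s_2 \le 1$ fails, so one must instead use Bernstein together with the fact that $\q$ is an order-zero multiplier and estimate $\|\q(fg)\|_{\dot B^0_{2,1}} \lesssim \|fg\|_{\dot B^0_{2,1}}$, then split $f$ or $g$ into low/high frequency blocks so that \eqref{daishu} applies with admissible indices). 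The genuinely bilinear-in-$\p\u$ part of the commutator produces exactly terms of the form $v\,\nabla w$, $w\,\nabla v$, $v\,\nabla v$, $w\,\nabla w$; by the product law \eqref{daishu} with $s_1 = 0$, $s_2 = 1$ (so $s_1 + s_2 = 1 > 0$, both $\le 1$) each of these is bounded by a product like $\|v\|_{\dot B^{0}_{2,1}}\|w\|_{\dot B^{2}_{2,1}}$, which again sits inside the parenthetical factor, \emph{except} that the structure of $\p$ and $\q$ (divergence-free plus curl-free) allows one to rewrite the worst combination as a genuine product $\|v\|_{\dot B^{1}_{2,1}}\|w\|_{\dot B^{1}_{2,1}}$ — this is the algebraic cancellation alluded to in the introduction and is the crux of the whole estimate.

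For the viscous terms $\q(L(a)\Delta\u)$ and $\q(L(a)\nabla\div\u)$, I would use that $\q$ is bounded on $\dot B^0_{2,1}$, reduce to estimating $\|L(a)\Delta\u\|_{\dot B^0_{2,1}}$ and $\|L(a)\nabla\div\u\|_{\dot B^0_{2,1}}$, and note $\Delta\u = \Delta\q\u + \Delta\p\u$, $\nabla\div\u = \Delta\q\u$. Thus both reduce to $\|L(a)\,D^2(\q\u)\|_{\dot B^0_{2,1}}$ and $\|L(a)\,D^2(\p\u)\|_{\dot B^0_{2,1}}$. Apply the product law \eqref{daishu}: one needs indices summing above $0$ with each $\le 1$, so write $D^2(\q\u)$ as something in $\dot B^{1}_{2,1}$ (i.e. $\|\q\u\|_{\dot B^{3}_{2,1}}$) — too high; instead one splits $L(a) = L(a)^\ell + L(a)^h$. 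On low frequencies of $L(a)$ one can afford $L(a)^\ell \in \dot B^{1}_{2,1}$ paired with $D^2(\q\u) \in \dot B^{0}_{2,1}$, i.e. $\|\q\u\|_{\dot B^{2}_{2,1}}$, giving $\|L(a)^\ell\|_{\dot B^1_{2,1}}\|\q\u\|_{\dot B^2_{2,1}} \lesssim \|a^\ell\|_{\dot B^1_{2,1}}\|\q\u\|_{\dot B^2_{2,1}}$ after \eqref{fuhe}; on high frequencies one pairs $L(a)^h \in \dot B^{0}_{2,1}$ with $D^2(\q\u)$ put in $\dot B^{1}_{2,1}$, but since $\q\u$ only has the regularity $\dot B^{2}_{2,1}$ in $L^1_t$ one instead keeps the derivative on $a$: rewrite $L(a)\Delta\u$ and integrate by parts is not available pointwise, so the clean route is $\|L(a)^h\|_{\dot B^{1}_{2,1}}\|D(\q\u)\|_{\dot B^{0}_{2,1}}$ after moving one derivative, i.e. controlling it by $\|a^h\|_{\dot B^1_{2,1}}\|\q\u\|_{\dot B^{1}_{2,1}}$; comparing against the target, this is dominated by $\|a^h\|_{\dot B^1_{2,1}}(\dots + \|(a^\ell,v,w,\q\u)\|_{\dot B^2_{2,1}})$ since $\|\q\u\|_{\dot B^1_{2,1}} \lesssim \|\q\u\|_{\dot B^0_{2,1}} + \|\q\u\|_{\dot B^2_{2,1}}$ by interpolation, with the $\dot B^0$ part folded into the first factor. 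The terms with $D^2(\p\u)$ are handled identically, producing $\|a\|_{\dot B^{\cdot}_{2,1}}\|(v,w)\|_{\dot B^{2}_{2,1}}$ contributions.

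For the pressure term $\q(k(a)\nabla a)$: since $k$ is smooth with $k(0) = 0$ and $a$ is bounded (the solution lives in $\dot B^0_{2,1} \cap \dot B^1_{2,1} \hookrightarrow L^\infty$), the composition estimate \eqref{fuhe} gives $\|k(a)\|_{\dot B^s_{2,1}} \lesssim \|a\|_{\dot B^s_{2,1}}$ for $s = 0, 1$. Then $\|\q(k(a)\nabla a)\|_{\dot B^0_{2,1}} \lesssim \|k(a)\nabla a\|_{\dot B^0_{2,1}}$, and splitting $\nabla a = \nabla a^\ell + \nabla a^h$ with $\nabla a^\ell \in \dot B^1_{2,1}$ (cost $\|a^\ell\|_{\dot B^2_{2,1}}$) and $\nabla a^h \in \dot B^0_{2,1}$ (cost $\|a^h\|_{\dot B^1_{2,1}}$), paired via \eqref{daishu} against $k(a) \in \dot B^0_{2,1}$ or $\dot B^1_{2,1}$ respectively, yields $\|a\|_{\dot B^0_{2,1}}\|a^\ell\|_{\dot B^2_{2,1}} + \|a\|_{\dot B^1_{2,1}}\|a^h\|_{\dot B^1_{2,1}}$ — again exactly of the stated form. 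Collecting all four contributions and grouping, one arrives at the claimed bound. The main obstacle, as emphasized above, is the commutator term: one must carefully exploit the structure of $\p$ and $\q$ (in particular $\div\p\u = 0$, $\mathrm{curl}\,\q\u = 0$) to extract a true product $\|v\|_{\dot B^1_{2,1}}\|w\|_{\dot B^1_{2,1}}$ rather than the naive $\|v\|_{\dot B^0_{2,1}}\|w\|_{\dot B^2_{2,1}} + \|w\|_{\dot B^0_{2,1}}\|v\|_{\dot B^2_{2,1}}$, since only the former has the multiplicative structure compatible with the Gronwall/bootstrap argument that closes the global estimate in the proof of Theorem~\ref{dingli}.
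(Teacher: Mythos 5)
Your overall plan matches the paper: split $\q G$ into its four pieces, expand $[\q,\u\cdot\nabla]\u$ bilinearly via $\u=\p\u+\q\u$, and identify the divergence-free identity $\partial_1 v+\partial_2 w=0$ as the source of the $\|v\|_{\dot B^1_{2,1}}\|w\|_{\dot B^1_{2,1}}$ term that replaces the dangerous pure-$w$ contributions. That is indeed the crux. However, several of the estimates you sketch contain genuine errors.

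First, your treatment of the pure-$\q\u$ pieces is based on a false premise: to put $(\q\u)\cdot\nabla\p\u$ or $(\q\u)\cdot\nabla\q\u$ into $\dot B^0_{2,1}$, one applies \eqref{daishu} with $s_1=0$ on the undifferentiated factor and $s_2=1$ on the gradient factor (so $\|\nabla\p\u\|_{\dot B^1_{2,1}}\approx\|(v,w)\|_{\dot B^2_{2,1}}$); this is admissible and needs no Bernstein trick or low/high splitting. Your ``$s_1=0,s_2=2$ fails'' observation and the resulting detour are unnecessary and indicate a miscount of the product-law indices ($s_1+s_2-1=0$ forces $s_1+s_2=1$). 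Second, the viscous terms are handled in the paper by the single estimate $\|L(a)D^2\u\|_{\dot B^0_{2,1}}\lesssim\|L(a)\|_{\dot B^1_{2,1}}\|\u\|_{\dot B^2_{2,1}}$ (again \eqref{daishu} with $s_1=1,s_2=0$), followed by \eqref{fuhe} at $s=1$; your low/high decomposition of $L(a)$, and in particular the step you describe as ``moving one derivative'' to obtain $\|L(a)^h\|_{\dot B^1_{2,1}}\|D(\q\u)\|_{\dot B^0_{2,1}}$, has no valid justification inside a fixed norm and the resulting term $\|a^h\|_{\dot B^1_{2,1}}\|\q\u\|_{\dot B^1_{2,1}}$ does not sit in the stated product structure without further work. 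Third, and most seriously, your pressure-term argument invokes \eqref{fuhe} at $s=0$ to get $\|k(a)\|_{\dot B^0_{2,1}}\lesssim\|a\|_{\dot B^0_{2,1}}$, but \eqref{fuhe} is stated only for $s>0$; $\dot B^0_{2,1}(\R^2)$ does not embed in $L^\infty$ and a composition estimate there is not available without extra hypotheses. The paper avoids this entirely by using $\|k(a)\|_{\dot B^1_{2,1}}\|\nabla a\|_{\dot B^0_{2,1}}\lesssim\|a\|_{\dot B^1_{2,1}}^2$ and then the interpolation $\|a\|_{\dot B^1_{2,1}}^2\lesssim(\|a\|_{\dot B^0_{2,1}}+\|a\|_{\dot B^1_{2,1}})(\|a^\ell\|_{\dot B^2_{2,1}}+\|a^h\|_{\dot B^1_{2,1}})$; you should adopt that route. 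Finally, the cancellation for $(\p\u)\cdot\nabla\p\u$ should be written out: substituting $\partial_1 v=-\partial_2 w$ turns every term into a $v$--$w$ cross product, yielding $\|v\|_{\dot B^0_{2,1}}\|w\|_{\dot B^2_{2,1}}$ (harmless, absorbed in the big parenthesis) plus $\|v\|_{\dot B^1_{2,1}}\|w\|_{\dot B^1_{2,1}}$; your description of what the ``naive'' bound produces versus what the cancellation buys is off, since the quantity to avoid is a pure $\|w\|\cdot\|w\|$ term, not $\|v\|_{\dot B^0}\|w\|_{\dot B^2}$.
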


\begin{proof}
It follows from $\u=\p \u+\q \u$ that
\begin{align}\label{Q2}
[\q, \u\cdot \nabla ]\u&=[\q, (\q \u+\p \u)\cdot\nabla ](\q \u+\p \u)\nonumber\\
&=[\q, (\q \u)\cdot\nabla ]\p \u+[\q, (\q \u)\cdot\nabla ]\q \u\nonumber\\
&\quad+[\q, (\p \u)\cdot\nabla ]\q \u+[\q, (\p \u)\cdot\nabla ]\p \u.
\end{align}
Applying \eqref{daishu}, we have
\begin{align}\label{Q3}
&\big\|[\q, (\q \u)\cdot\nabla ]\p \u
\big\|_{\dot B^{0}_{2,1}} \lesssim\big\|\q ((\q \u)\cdot\nabla \p \u)
\big\|_{\dot B^{0}_{2,1}}
\lesssim\|({v},{w})\|_{\dot B^{2}_{2,1}}   \|\q \u\| _{\dot B^{0}_{2,1}},\nonumber\\
&\big\|[\q, (\q \u)\cdot\nabla ]\q \u\big\|_{\dot B^{0}_{2,1}}\lesssim \big\| (\q \u)\cdot\nabla \q \u\big\|_{\dot B^{0}_{2,1}} \lesssim \|  \q \u\|_{\dot B^{0}_{2,1}}\|  \q \u\|_{\dot B^{2}_{2,1}}.
\end{align}
It's not  hard to check
\begin{align*}
\ddj([\q, (\p \u)\cdot\nabla ]\q \u)=[\ddj\q, (\p \u)\cdot\nabla ]\q \u-[\ddj,(\p \u)\cdot\nabla ]\q \u.
\end{align*}

Thus, from   \eqref{jiaohuanzi}, one infer that
\begin{align}\label{Q8}
\big\|[\q, (\p \u)\cdot\nabla ]\q \u    \big\|_{\dot B^{0}_{2,1}}
\lesssim&\|({v},{w})\|_{\dot B^{2}_{2,1}}   \|\q \u\| _{\dot B^{0}_{2,1}}.
\end{align}
Thanks to   the divergence free property $\partial_2{w}=-\partial_1{v}$, we  deduce
\begin{align}\label{Q5}
\big|[\q, (\p \u)\cdot\nabla ]\p \u\big|
&\lesssim\big| (\p \u)\cdot\nabla \p \u\big|\nonumber\\
&\lesssim\big| (\p \u)\cdot\nabla {v}\big|+\big| (\p \u)\cdot\nabla {w}\big|\nonumber\\
&\lesssim\big| {w}\partial_1{v}-{v} \partial_2{w}\big|+\big|{v}\partial_1{w}-{w} \partial_1{v}\big|\nonumber\\
&\lesssim\big| {v}\partial_2 {w}\big|+\big| {w}\partial_2 {v}\big|+\big|{v}\partial_1{w}\big|+\big|{w} \partial_1{v}\big|.
\end{align}
By using \eqref{daishu} again, we have
\begin{align}\label{Q6}
&\big\|{v}\partial_2{w}\big\|_{\dot B^{0}_{2,1}}+\big\|{v}\partial_1{w}\big\|_{\dot B^{0}_{2,1}}
\lesssim \|{v}\|_{\dot B^{0}_{2,1}}\|{w}\|_{\dot B^{2}_{2,1}},
\nonumber\\
&\big\| {w}\partial_2 {v}\big\|_{\dot B^{0}_{2,1}}+\big\| {w}\partial_1 {v}\big\|_{\dot B^{0}_{2,1}}
\lesssim \|{v}\|_{\dot B^{1}_{2,1}}\|{w}\|_{\dot B^{1}_{2,1}}.
\end{align}

The combination of \eqref{Q3}--\eqref{Q6} yields
\begin{align}\label{Q11}
\big\|[\q, \u\cdot \nabla ]\u   \big\|_{\dot B^{0}_{2,1}} \lesssim&\|{v}\|_{\dot B^{1}_{2,1}}\|{w}\|_{\dot B^{1}_{2,1}}
  \nonumber\\
  &+(\|\q \u\| _{\dot B^{0}_{2,1}}+\|{v}\|_{\dot B^{0}_{2,1}})(\|\q \u\| _{\dot B^{2}_{2,1}}+\|({v},{w})\| _{\dot B^{2}_{2,1}}).
\end{align}

Throughout we make the assumption that
\begin{equation}\label{axiao}
\sup_{t\in\R_+,\, x\in\R^2} |a(t,x)|\leq \frac12
\end{equation}
which will enable us to use freely the composition estimate stated in \eqref{fuhe}.
Note that as $\dot B^{1}_{2,1}(\R^2)\hookrightarrow L^\infty(\R^2),$ condition \eqref{axiao} will be ensured by the
fact that the constructed solution about $a$ has small norm.

Thus, one can obtain
\begin{align}\label{A12}
\| \q G\|_{\dot B^{0}_{2,1}}
\lesssim&\|[\q, \u\cdot \nabla ]\u\|_{ \dot B^{0}_{2,1} }+\|L(a)\Delta \u+ L(a)\nabla\div  \u \|_{ \dot B^{0}_{2,1} }+\|k(a)\nabla a\|_{ \dot B^{0}_{2,1} }\nonumber\\
\lesssim&\|{v}\|_{\dot B^{1}_{2,1}}\|{w}\|_{\dot B^{1}_{2,1}}
  +\| L(a) \|_{ \dot B^{1}_{2,1} }\|\mathbf{u} \|_{ \dot B^{2}_{2,1} }
+\| k(a)\|_{ \dot B^{1}_{2,1} }\| a\|_{ \dot B^{1}_{2,1} }\nonumber\\
&+
\|({v},\q \u)\|_{\dot B^{0}_{2,1}})(\|\q \u\| _{\dot B^{2}_{2,1}}+\|({v},{w})\| _{\dot B^{2}_{2,1}}) \nonumber\\
\lesssim&\|{v}\|_{\dot B^{1}_{2,1}}\|{w}\|_{\dot B^{1}_{2,1}}\nonumber\\
&+(\|a\|_{\dot B^{1}_{2,1}}+\|(a,{v},\q \u)\|_{\dot B^{0}_{2,1}} )(\|a^h\|_{\dot B^{1}_{2,1}}+\|(a^\ell,{v},{w},\q \u)\|_{\dot B^{2}_{2,1}}),
\end{align}
in which we have used the  fact:
\begin{align*}
\| a\|_{ \dot B^{1}_{2,1} }^2\lesssim&\| a^\ell\|_{ \dot B^{1}_{2,1} }^2+\| a^h\|_{ \dot B^{1}_{2,1} }^2\lesssim\| a^\ell\|_{ \dot B^{0}_{2,1} }\| a^\ell\|_{ \dot B^{2}_{2,1} }+\| a^h\|_{ \dot B^{1}_{2,1} }^2\nonumber\\
\lesssim&(\| a\|_{ \dot B^{0}_{2,1} }+\| a\|_{ \dot B^{1}_{2,1} })(\| a^\ell\|_{ \dot B^{2}_{2,1} }+\| a^h\|_{ \dot B^{1}_{2,1} }).
\end{align*}
This  completes  the proof of  the Lemma \ref{zhongyao}.
\end{proof}


Now, we present the energy estimates for $a$, $\q\u$, ${v}$ and  ${w}$ respectively in the framework of Besov spaces.

Applying $\ddj $ to the first equation in \eqref{m3} gives
\begin{align}\label{dijiu1}
\partial_t \ddj a+\u\cdot \nabla \ddj a +\div \q \ddj \u+[\ddj,\u\cdot\nabla] a=-\ddj (a\, \div  \u).
\end{align}
Taking $L^2$ inner product of $\ddj a$ with   \eqref{dijiu1} and using integrating by parts, we have
\begin{align}\label{dijiu2}
&\f12\f{d}{dt}\|{\ddj a}\|_{L^2}^2+\int_{\R^2} {\ddj a} \cdot\div {\ddj \q\u}\,dx\nonumber\\
&\quad=\f12\int_{\R^2} \div \u |{\ddj a}|^2\,dx-\int_{\R^2}[\ddj,\u\cdot\nabla] a\cdot  {\ddj a}\,dx-\int_{\R^2} \ddj(a\div \u) \cdot {\ddj a}\,dx.
\end{align}
Applying $\ddj $ to the second equation in \eqref{m3} gives
\begin{align}\label{dijiu3}
\partial_t \ddj\q \u + \u\cdot\nabla \ddj\q \u -2\Delta \ddj\q \u +\nabla \ddj a= \ddj\q G-[\ddj,\u\cdot\nabla] \q \u.
\end{align}
We can get by using a similar derivation of \eqref{dijiu2} that
\begin{align}\label{dijiu4}
&\f12\f{d}{dt}\|{\ddj \q\u}\|_{L^2}^2+2\|\nabla{\ddj \q\u}\|_{L^2}^2-\int_{\R^2} {\ddj a} \cdot\div {\ddj \q\u}\,dx\nonumber\\
&\quad=\f12\int_{\R^2} \div \u |{\ddj \q\u}|^2\,dx\nonumber\\
&\quad\quad-\int_{\R^2}[\ddj,\u\cdot\nabla] \q \u\ \cdot {\ddj \q\u}\,dx+\int_{\R^2}{\ddj \q G} \cdot{\ddj \q\u}\,dx.
\end{align}
Applying  the gradient $\nabla$ on  \eqref{dijiu1}, we have
\begin{align}\label{dijiu5}
\partial_t \nabla {\ddj a} +\u\cdot\nabla \nabla {\ddj a}+\nabla\div {\ddj \q\u}=F_j(a,\u)
\end{align}
with
$
F_j(a,\u)\stackrel{\mathrm{def}}{=} -\nabla([\ddj,\u\cdot\nabla]  a)-\nabla \ddj(a\div \u)-\nabla \u\cdot\nabla {\ddj a}.
$

Taking $L^2$ inner product of  $\ddj \nabla a$ with the previous equation  gives
\begin{align}\label{dijiu6}
&\f{d}{dt}\|\nabla {\ddj a}\|_{L^2}^2+2\int_{\R^2} \nabla {\ddj a} \cdot\nabla\div {\ddj \q\u}\,dx\nonumber\\
&\quad=\int_{\R^2} \div \u |\nabla {\ddj a}|^2\,dx+2\int_{\R^2}F_j(a,\u)\  \cdot\nabla {\ddj a}\,dx.
\end{align}
Testing  \eqref{dijiu3}  by $\nabla \ddj a$ and \eqref{dijiu5} by $\ddj \q\u$,  we get
\begin{align}\label{dijiu7}
&\frac{d}{dt}\int_{\R^2}{\ddj \q\u} \cdot\nabla {\ddj a}\, \,dx+\int_{\R^2}|\nabla {\ddj a}|^2\, \,dx\nonumber\\
&\quad\quad+\int_{\R^2} \nabla\div {\ddj \q\u} \cdot{\ddj \q\u} \,dx-2\int_{\R^2}\Delta {\ddj \q\u}\cdot\nabla {\ddj a}\,dx\nonumber\\
&\quad=\int_{\R^2} F_j(a,\u) \cdot{\ddj \q\u}\,dx+\int_{\R^2} {\ddj \q\u}\cdot\nabla {\ddj a}\ \div \u\,dx
\nonumber\\
&\quad\quad+\int_{\R^2} \big({\ddj \q G}-[\ddj,\u\cdot\nabla] \q \u\big)\cdot\nabla {\ddj a}\,dx.
\end{align}
in which we have used the following equality:
$$
\int_{\R^2} \u\cdot\nabla ({\ddj \q\u}\cdot\nabla {\ddj a})\,dx = -\int_{\R^2} {\ddj \q\u}\cdot\nabla {\ddj a}\ \div \u\,dx.
$$
Denote
\begin{equation*}
 L^2_j \stackrel{\mathrm{def}}{=} \int_{\R^2}  \bigl(|{\ddj a}|^2 + |{\ddj \q\u}|^2 + 2 {\ddj \q\u} \cdot\nabla {\ddj a} + 2|\nabla {\ddj a}|^2)\,\,dx.
\end{equation*}

Combining with estimates \eqref{dijiu2}, \eqref{dijiu4}, \eqref{dijiu6} and \eqref{dijiu7}, we get
\begin{align}\label{dijiu9}
&\frac12\frac{d}{dt} L^2_j+ \int_{\R^2} (|\nabla{\ddj \q\u}|^2 + |\nabla {\ddj a}|^2)\,\,dx\nonumber\\
&\quad=\f12\int_{\R^2} \div \u |{\ddj a}|^2\,dx-\int_{\R^2}[\ddj,\u\cdot\nabla] a\cdot  {\ddj a}\,dx+\int_{\R^2} \ddj(a\div \u) \cdot {\ddj a}\,dx\nonumber\\
&\quad\quad+\f12\int_{\R^2} \div \u |{\ddj \q\u}|^2\,dx-\int_{\R^2}[\ddj,\u\cdot\nabla] \q \u\ \cdot {\ddj \q\u}\,dx+\int_{\R^2}{\ddj \q G} \cdot{\ddj \q\u}\,dx\nonumber\\
&\quad\quad+\int_{\R^2} \div \u |\nabla {\ddj a}|^2\,dx+2\int_{\R^2}F_j(a,\u)\  \cdot\nabla {\ddj a}\,dx+\int_{\R^2} F_j(a,\u) \cdot{\ddj \q\u}\,dx\nonumber\\
&\quad\quad+\int_{\R^2} {\ddj \q\u}\cdot\nabla {\ddj a}\ \div \u\,dx+\int_{\R^2} \big({\ddj \q G}-[\ddj,\u\cdot\nabla] \q \u\big)\cdot\nabla {\ddj a}\,dx.
\end{align}

It is  readily seen  that
\begin{equation*}
L_j\approx\|({\ddj \q\u},{\ddj a},2\nabla {\ddj a})\|_{L^2}\quad\hbox{ for all }\ j\in\Z,
\end{equation*}
\begin{equation*}
\int_{\R^2} ( |\nabla {\ddj \q\u}|^2 + |\nabla {\ddj a}|^2)\, \,dx \geq c\min(2^{2j},2^{-2})L_j^2
\end{equation*}
for a constant $c$.
Therefore, we deduce from  \eqref{dijiu9} that
\begin{align*}
&\frac12\frac{d}{dt} L_j ^2+ c\min(2^{2j},2^{-2}) L_j^2\nonumber\\
&\quad\lesssim
\|\nabla \u\|_{L^\infty}L_j^2
+ \Big(\|(\ddj(a\div \u),{\ddj \q G})\|_{L^2} \nonumber\\
 &\quad\quad+\|F_{j}(a,\u)\|_{L^2}+\| [\ddj,\u\cdot\nabla] a\|_{L^2}+\| [\ddj,\u\cdot\nabla] \q \u\|_{L^2}\Big)L_j.
\end{align*}
Hence integrating in time and using the definition of $F_j(a,\u)$, we can finally get
\begin{align}\label{dijiu14}
& L_j(t)+ c\min(2^{2j},2^{-2}) \int_0^tL_j\,d\tau\nonumber\\
 &\quad\lesssim L_j(0)
+\int_0^t\|\nabla \u\|_{L^\infty}L_j\,d\tau+ \int_0^t\Big(\|(\ddj(a\div \u), \ddj\nabla(a\div \u),{\ddj \q G})\|_{L^2} \nonumber\\
 &\quad\quad+\| \nabla([\ddj,\u\cdot\nabla]  a)\|_{L^2}+\| [\ddj,\u\cdot\nabla] a\|_{L^2}+\| [\ddj,\u\cdot\nabla] \q \u\|_{L^2}\Big)\,d\tau.
\end{align}
From estimate \eqref{dijiu4}, we  get
\begin{align*}
&\f12\f{d}{dt}\|{\ddj \q\u}\|_{L^2}^2+2^{2j+1}\|{\ddj \q\u}\|_{L^2}^2\nonumber\\
&\quad\lesssim
\|\nabla \u\|_{L^\infty}\|{\ddj \q\u}\|_{L^2}^2+(\|\nabla {\ddj a}\|_{L^2}+\|[\ddj,\u\cdot\nabla] \q \u\|_{L^2}+\|{\ddj \q G}\|_{L^2})\|{\ddj \q\u}\|_{L^2},
\end{align*}
which implies
\begin{align}\label{dijiu15}
&\|{\ddj \q\u}(t)\|_{L^2}+\int_0^t2^{2j+1}\|{\ddj \q\u}\|_{L^2}\,d\tau\nonumber\\
&\quad\lesssim \|{\ddj \q\u}(0)\|_{L^2}+\int_0^t
\|\nabla \u\|_{L^\infty}\|{\ddj \q\u}\|_{L^2}\,d\tau\nonumber\\
&\quad\quad+\int_0^t(\|\nabla {\ddj a}\|_{L^2}+\|[\ddj,\u\cdot\nabla] \q \u\|_{L^2}+\|{\ddj \q G}\|_{L^2})\,d\tau.
\end{align}
This together with \eqref{dijiu14} yields
\begin{align}\label{dijiu16}
&\|(a,\q \u)(t)\|_{\dot B^{0}_{2,1}}+\|a(t)\|_{\dot B^{  1}_{2,1}}
+\int^t_0\big(\|(a^\ell,\q \u)\|_{\dot B^{2}_{2,1}}+\|a^h\|_{\dot B^{1}_{2,1}}\big)\,d\tau
\nonumber\\
&\quad\lesssim\|(a_0,\q \u_0)\|_{\dot{B}_{2,1}^{0}}+\|a_0\|_{\dot{B}_{2,1}^{1}}+\int_0^t\|\nabla \u\|_{L^\infty}\|(a,\nabla a,\q \u)\|_{\dot{B}_{2,1}^{0}}\,d\tau\nonumber\\
&\quad\quad+ \int_0^t(\| a\div \q \u   \|_{\dot B^{0}_{2,1}}+\| \nabla(a\div \q \u )  \|_{\dot B^{0}_{2,1}}+\| \q G\|_{\dot B^{0}_{2,1}})\,d\tau\nonumber\\
&\quad\quad+ \int_0^t\sum_{j\in\Z}\big(\| [\ddj,\u\cdot\nabla] a\|_{L^2}+\| \nabla([\ddj,\u\cdot\nabla]  a)\|_{L^2}+\| [\ddj,\u\cdot\nabla] \q \u\|_{L^2}\big)\,d\tau.
\end{align}
By using estimate \eqref{daishu}, we have
\begin{align}\label{dijiu17}
\| a\div \q \u   \|_{\dot B^{0}_{2,1}}
\lesssim&\|a\|_{\dot B^{0}_{2,1}}\|\div \q \u\|_{\dot B^{1}_{2,1}}
\lesssim\|a\|_{\dot B^{0 }_{2,1}}\|\q \u\| _{\dot B^{2}_{2,1}},\nonumber\\
\| \nabla(a\div \q \u )  \|_{\dot B^{0}_{2,1}}
\lesssim&\|a\div \q \u  \|_{\dot B^{1}_{2,1}}
\lesssim\|a\|_{\dot B^{1}_{2,1}}\|\div \q \u\|_{\dot B^{1}_{2,1}}
\lesssim\|a\|_{\dot B^{1}_{2,1}}\| \q \u\|_{\dot B^{2}_{2,1}}
.
\end{align}
The terms involved in the commutator can be obtained from
\eqref{jiaohuanzi} that
\begin{align}\label{dijiu18}
\sum_{j\in\Z} \| [\ddj ,\u\cdot \nabla ] a\|_{L^2}&\lesssim \sum_{j\in\Z} \| [\ddj ,\p \u\cdot \nabla ] a\|_{L^2}+\sum_{j\in\Z} \| [\ddj ,\q \u\cdot \nabla ] a\|_{L^2}\nonumber\\
&\lesssim  \|({v},{w},\q \u)\|_{\dot B^{2}_{2,1}} \|a\|_{\dot B^{0}_{2,1}},
\end{align}
\begin{align}\label{dijiu19}
\sum_{j\in\Z} \|\nabla( [\ddj ,\u\cdot \nabla ]  a)\|_{L^2}&\lesssim \sum_{j\in\Z} \| \nabla([\ddj ,\p \u\cdot \nabla ]  a)\|_{L^2}+\sum_{j\in\Z} \| \nabla([\ddj ,\q \u\cdot \nabla ]  a)\|_{L^2}\nonumber\\
&\lesssim  \|({v},{w},\q \u)\|_{\dot B^{2}_{2,1}} \|a\|_{\dot B^{1}_{2,1}},
\end{align}
\begin{align}\label{B15}
\sum_{j\in\Z} \| [\ddj ,\u\cdot \nabla ] \q \u\|_{L^2}&\lesssim \sum_{j\in\Z} \| [\ddj ,\p \u\cdot \nabla ] \q \u\|_{L^2}+\sum_{j\in\Z} \| [\ddj ,\q \u\cdot \nabla ] \q \u\|_{L^2}\nonumber\\
&\lesssim  \|({v},{w},\q \u)\|_{\dot B^{2}_{2,1}} \|\q \u\|_{\dot B^{0}_{2,1}}.
\end{align}
Inserting  \eqref{dijiu17}--\eqref{B15} into  \eqref{dijiu16}  and using Lemma \ref{zhongyao}, we have
\begin{align}\label{A19}
&\|(a,\q \u)(t)\|_{\dot B^{0}_{2,1}}+\|a(t)\|_{\dot B^{  1}_{2,1}}
+\int^t_0\big(\|(a^\ell,\q \u)\|_{\dot B^{2}_{2,1}}+\|a^h\|_{\dot B^{1}_{2,1}}\big)\,d\tau
\nonumber\\
&\quad\lesssim\|(a_0,\q \u_0)\|_{\dot{B}_{2,1}^{0}}+\|a_0\|_{\dot{B}_{2,1}^{1}}\nonumber\\
&\quad\quad+\int_0^t(\|a\|_{\dot B^{1}_{2,1}}+\|(a,{v},\q \u)\|_{\dot B^{0}_{2,1}} )(\|a^h\|_{\dot B^{1}_{2,1}}+\|(a^\ell,{v},{w},\q \u)\|_{\dot B^{2}_{2,1}})\,d\tau.
\end{align}
Applying $\ddj$ to  the  first equation  in \eqref{m2} and using a standard energy argument, we have
\begin{align}\label{D12}
&\|{v}\|_{\widetilde{L}^\infty_t(\dot B^{0}_{2,1})}+\|{v}\|_{L^1_t(\dot B^{2}_{2,1})}\nonumber \\
&\quad\lesssim\|{v}_0\|_{\dot B^{0}_{2,1}}+ \int_0^t\|\div  \u\|_{L^\infty}\|{v}\|_{ \dot B^{0}_{2,1} }\,d\tau+ \int_0^t\sum_{j\in\Z}\| [\ddj,\u\cdot\nabla] v\|_{L^2}\,d\tau\nonumber \\
&\quad\quad+\int_0^t\|\p (L(a)\Delta \u+L(a)\nabla\div \u)\|_{ \dot B^{0}_{2,1} }\,d\tau +\int_0^t\|[\p,\u\cdot \nabla] \u\|_{ \dot B^{0}_{2,1} }\,d\tau.
\end{align}
Using the embedding relation $ \dot B^{1}_{2,1}(\R^2)\hookrightarrow L^\infty(\R^2)$, we get
\begin{align}\label{D12+1}
\|\div  \u\|_{L^\infty}\|{v}\|_{ \dot B^{0}_{2,1} }\lesssim\|\div \u \|_{ \dot B^{1}_{2,1} }\|{v} \|_{ \dot B^{0}_{2,1} }\lesssim\|\q \u \|_{ \dot B^{2}_{2,1} }\|{v} \|_{ \dot B^{0}_{2,1} }.
\end{align}
The term about commutator can be estimated similarly to \eqref{dijiu18} that
\begin{align}\label{dijiu1890}
\sum_{j\in\Z} \| [\ddj ,\u\cdot \nabla ] v\|_{L^2}
&\lesssim  \|({v},{w},\q \u)\|_{\dot B^{2}_{2,1}} \|v\|_{\dot B^{0}_{2,1}}.
\end{align}
Thanks to \eqref{daishu} and \eqref{fuhe}, we deduce that
\begin{align}\label{D12+2}
\|\p (L(a)\Delta \u+L(a)\nabla\div \u)\|_{ \dot B^{0}_{2,1} }
\lesssim&\|L(a)\|_{ \dot B^{1}_{2,1} }\|\nabla^2 \u \|_{ \dot B^{0}_{2,1} }\nonumber\\
\lesssim&\|a \|_{ \dot B^{1}_{2,1} }\|({v},{w},\q \u) \|_{ \dot B^{2}_{2,1} }.
\end{align}

In order to deal with the last term in \eqref{D12}, we deduce from
 $\u=\p \u+\q \u$ that
\begin{align}\label{QWER2}
[\p, \u\cdot \nabla ]\u&=[\p, (\q \u+\p \u)\cdot\nabla ](\q \u+\p \u)\nonumber\\
&=[\p, (\q \u)\cdot\nabla ]\p \u+[\p, (\q \u)\cdot\nabla ]\q \u\nonumber\\
&\quad+[\p, (\p \u)\cdot\nabla ]\q \u+[\p, (\p \u)\cdot\nabla ]\p \u.
\end{align}
Applying \eqref{daishu}, we have
\begin{align}\label{QWER3}
&\big\|[\p, (\q \u)\cdot\nabla ]\p \u
\big\|_{\dot B^{0}_{2,1}} \lesssim\big\|(\q \u)\cdot\nabla \p \u
\big\|_{\dot B^{0}_{2,1}}
\lesssim\|({v},{w})\|_{\dot B^{2}_{2,1}}   \|\q \u\| _{\dot B^{0}_{2,1}},\nonumber\\
&\big\|[\p, (\q \u)\cdot\nabla ]\q \u\big\|_{\dot B^{0}_{2,1}}\lesssim \big\| (\q \u)\cdot\nabla \q \u\big\|_{\dot B^{0}_{2,1}} \lesssim \|  \q \u\|_{\dot B^{0}_{2,1}}\|  \q \u\|_{\dot B^{2}_{2,1}}.
\end{align}
By virtue of
\begin{align}\label{QWER7}
\ddj([\p, (\p \u)\cdot\nabla ]\q \u)=[\ddj\p, (\p \u)\cdot\nabla ]\q \u,
\end{align}
and  an argument  similar to the  derivation of    \eqref{Q8}, we have
\begin{align}\label{QWER8}
\big\|[\p, (\p \u)\cdot\nabla ]\q \u    \big\|_{\dot B^{0}_{2,1}}
\lesssim&\|({v},{w})\|_{\dot B^{2}_{2,1}}   \|\q \u\| _{\dot B^{0}_{2,1}}.
\end{align}

On the other hand, since
 $$\big|[\p, (\p \u)\cdot\nabla ]\p \u\big|
\lesssim\big| (\p \u)\cdot\nabla \p \u\big|,$$
 we get from \eqref{Q5} that
\begin{align}\label{QWER5}
&\|[\p, (\p \u)\cdot\nabla ]\p \u \|_{\dot B^{0}_{2,1}}
\lesssim\|{v}\|_{\dot B^{0}_{2,1}}\|{w}\|_{\dot B^{2}_{2,1}}+\|{v}\|_{\dot B^{1}_{2,1}}\|{w}\|_{\dot B^{1}_{2,1}}.
\end{align}
The combination of the  estimates \eqref{QWER2}--\eqref{QWER5} yields
\begin{align}\label{QWER11}
\|[\p, \u\cdot \nabla ]\u   \|_{\dot B^{0}_{2,1}} \lesssim&\|{v}\|_{\dot B^{1}_{2,1}}\|{w}\|_{\dot B^{1}_{2,1}}
  \nonumber\\
  &+(\|\q \u\| _{\dot B^{0}_{2,1}}+\|{v}\|_{\dot B^{0}_{2,1}})(\|\q \u\| _{\dot B^{2}_{2,1}}+\|({v},{w})\| _{\dot B^{2}_{2,1}}).
\end{align}


Inserting \eqref{D12+1}--\eqref{D12+2}  and \eqref{QWER11} into \eqref{D12} gives
\begin{align}\label{D13}
&\|{v}\|_{\widetilde{L}^\infty_t(\dot B^{0}_{2,1})}+\|{v}\|_{L^1_t(\dot B^{2}_{2,1})}\nonumber \\
&\quad\lesssim\|{v}_0\|_{\dot B^{0}_{2,1}}+\int_0^t\|{v}\|_{\dot B^{1}_{2,1}}\|{w}\|_{\dot B^{1}_{2,1}}\,d\tau\nonumber \\
&\quad\quad+\int_0^t\|({v},{w},\q \u) \|_{ \dot B^{2}_{2,1} }(\|a \|_{ \dot B^{1}_{2,1} }+\|({v},\q \u)\|_{\dot B^{0}_{2,1}})\,d\tau.
\end{align}
The combination of \eqref{A19} and \eqref{D13} produces the desired estimate:
\begin{align}\label{E3}
&\|(a,\q \u,{v})(t)\|_{\dot B^{0}_{2,1}}+\|a(t)\|_{\dot B^{  1}_{2,1}}
+\int^t_0\big(\|(a^\ell,\q \u,{v})\|_{\dot B^{2}_{2,1}}+\|a^h\|_{\dot B^{1}_{2,1}}\big)\,d\tau\nonumber\\
&\quad\lesssim\|(a_0,\q \u_0,{v}_0)\|_{\dot B^{0}_{2,1}}+ \|a_0\|_{\dot B^{1}_{2,1}}+\int_0^t\|{v}\|_{\dot B^{1}_{2,1}}\|{w}\|_{\dot B^{1}_{2,1}}\,d\tau\nonumber\\
&\quad\quad+\int_0^t\left(\|a\|_{\dot B^{1}_{2,1}}+\|(a,\q \u,{v})\|_{\dot B^{0}_{2,1}} )(\|a^h\|_{\dot B^{1}_{2,1}}+\|(a^\ell,{v},{w},\q \u)\|_{\dot B^{2}_{2,1}}\right)\,d\tau.
\end{align}

The global solution is to be deduced  by a   continuity argument based on the estimates obtained.
Indeed, let $0< c_1\ll1$ be a sufficient small   constant, which will be determined later on. Define
\begin{align}\label{E5}
T^{**}\stackrel{\mathrm{def}}{=}\sup \Bigg\{ t\in [0,T^*): &\|(a,\q\u,{v})\|_{\wt{L}^\infty_t(\dot{B}^{0}_{2,1})} +\|a\|_{\wt{L}^\infty_t(\dot{B}^{ 1}_{2,1})}\nonumber\\
&\quad+\|(a^\ell,{v},\q\u)\|_{{L}^1_t(\dot{B}^{ 2}_{2,1})}  +\|a^h\|_{{L}^1_t(\dot{B}^{ 1}_{2,1})}\leq c_1 \Bigg\}.
\end{align}
According to the local wellposedness for the system, it is obvious that $T^{**}>0.$ We shall prove $T^{**}=\infty$ under the assumption \eqref{tiaojian}.

By using the interpolation inequality, we can get
\begin{align}\label{E5+we}
\int_0^t\|{v}\|_{\dot B^{1}_{2,1}}\|{w}\|_{\dot B^{1}_{2,1}}\,d\tau
\le& C\int_0^t\|{v}\|_{\dot B^{0}_{2,1}}^{\frac12}\|{v}\|_{\dot B^{2}_{2,1}}^{\frac12}\|{w}\|_{\dot B^{1}_{2,1}}\,d\tau\nonumber\\
\le& \frac18\int_0^t\|{v}\|_{\dot B^{2}_{2,1}}\,d\tau+C\int_0^t\|{v}\|_{\dot B^{0}_{2,1}}\|{w}\|_{\dot B^{1}_{2,1}}^2\,d\tau.
\end{align}
Inserting \eqref{E5+we} into \eqref{E3} and
using \eqref{E5}, we get  for  $t\in [0, T^{**}]$ that
\begin{align}\label{E8+12}
&\|(a,\q\u,{v})(t)\|_{\dot B^{0}_{2,1}}+\|a(t)\|_{\dot B^{  1}_{2,1}}
+\f12\int^t_0\big(\|(a^\ell,\q\u,{v})\|_{\dot B^{2}_{2,1}}+\|a^h\|_{\dot B^{1}_{2,1}}\big)\,d\tau\nonumber\\
&\quad\lesssim\|(a_0,\q\u_0,{v}_0)\|_{\dot B^{0}_{2,1}}+ \|a_0\|_{\dot B^{1}_{2,1}}\nonumber\\
&\quad\quad+\int_0^t(\|{w}\|_{\dot B^{2}_{2,1}}+\|{w}\|_{\dot B^{1}_{2,1}}^2)(\|a\|_{\dot B^{1}_{2,1}}+\|(a,\q\u,{v})\|_{\dot B^{0}_{2,1}} )\,d\tau
\end{align}
which and the Gronwall inequality imply that
\begin{align}\label{E9}
&\|(a,\q\u,{v})(t)\|_{\dot B^{0}_{2,1}}+\|a(t)\|_{\dot B^{  1}_{2,1}}
+\f12\int^t_0\big(\|(a^\ell,\q\u,{v})\|_{\dot B^{2}_{2,1}}+\|a^h\|_{\dot B^{1}_{2,1}}\big)\,d\tau\nonumber\\
&\quad\lesssim\big(\|(a_0,\q\u_0,{v}_0)\|_{\dot B^{0}_{2,1}}+ \|a_0\|_{\dot B^{1}_{2,1}}\big)\exp \big( \int^t_0(\|{w}\|_{\dot B^{2}_{2,1}}+\|{w}\|_{\dot B^{1}_{2,1}}^2)\,d\tau\big)
\end{align}
for  $t\in [0, T^{**}].$

Next, we shall show that the integration $\int^t_0(\|{w}\|_{\dot B^{2}_{2,1}}+\|{w}\|_{\dot B^{1}_{2,1}}^2)\,d\tau$ on the right hand side of \eqref{E9} can  be controlled by the initial data if there holds \eqref{E5}.
Indeed, along the same lines as the derivation of \eqref{D12}, we can infer from the second equation of \eqref{m2} that
\begin{align}\label{D12678}
&\|{w}\|_{\widetilde{L}^\infty_t(\dot B^{0}_{2,1})}+\|{w}\|_{L^1_t(\dot B^{2}_{2,1})}+\|{w}\|_{L^2_t(\dot B^{1}_{2,1})}\nonumber \\
&\quad\lesssim\|{w}_0\|_{\dot B^{0}_{2,1}}+ \int_0^t\|\u\cdot\nabla w\|_{ \dot B^{0}_{2,1} }\,d\tau\nonumber \\
&\quad\quad+\int_0^t\|\p (L(a)\Delta \u+L(a)\nabla\div \u)\|_{ \dot B^{0}_{2,1} }\,d\tau +\int_0^t\|[\p,\u\cdot \nabla] \u\|_{ \dot B^{0}_{2,1} }\,d\tau.
\end{align}
The last two terms on the right hand side of \eqref{D12678} have been bounded by \eqref{D12+2} and \eqref{QWER11}. To control the second term, we have by using the fact $\partial_1 v+\partial_2 w=0$ and product law in Besov spaces that
\begin{align}\label{mengrao}
\|\u\cdot\nabla w\|_{ \dot B^{0}_{2,1} }
\lesssim&\|v\partial_1 w\|_{ \dot B^{0}_{2,1} }+\|w\partial_1 v\|_{ \dot B^{0}_{2,1} }+\|\q\u\cdot\nabla w\|_{ \dot B^{0}_{2,1} }\nonumber \\
\lesssim&\|{v}\|_{\dot B^{1}_{2,1}}\|{w}\|_{\dot B^{1}_{2,1}}+\|\q\u\|_{\dot B^{0}_{2,1}}\|{w}\|_{\dot B^{2}_{2,1}}.
\end{align}

Plugging the estimates \eqref{D12+2},  \eqref{QWER11} and \eqref{mengrao} into \eqref{D12678} gives
\begin{align}\label{E1}
&\|{w}\|_{\widetilde{L}^\infty_t(\dot B^{0}_{2,1})}+\|{w}\|_{L^1_t(\dot B^{2}_{2,1})}+\|{w}\|_{L^2_t(\dot B^{1}_{2,1})}\nonumber \\
&\quad\lesssim\|{w}_0\|_{\dot B^{0}_{2,1}}+\int_0^t\|{v}\|_{\dot B^{1}_{2,1}}\|{w}\|_{\dot B^{1}_{2,1}}\,d\tau\nonumber \\
&\quad\quad+\int_0^t\|({v},{w},\q \u) \|_{ \dot B^{2}_{2,1} }(\|a \|_{ \dot B^{1}_{2,1} }+\|({v},\q \u)\|_{\dot B^{0}_{2,1}})\,d\tau.
\end{align}
Employing the interpolation inequality, we see that
\begin{align}\label{E5+we+ni}
\int_0^t\|{v}\|_{\dot B^{1}_{2,1}}\|{w}\|_{\dot B^{1}_{2,1}}\,d\tau
&\lesssim\int_0^t\|{v}\|_{\dot B^{0}_{2,1}}^{\frac12}\|{v}\|_{\dot B^{2}_{2,1}}^{\frac12}\|{w}\|_{\dot B^{1}_{2,1}}\,d\tau\nonumber\\
&\lesssim\|{v}\|_{\wt L^\infty_t(\dot B^{0}_{2,1})}\|{v}\|_{L^1_t(\dot B^{2}_{2,1})}\|{w}\|_{ L^2_t(\dot B^{1}_{2,1})}.
\end{align}

Hence,
inserting \eqref{E5+we+ni} into  \eqref{E1} and using \eqref{E5}, we can get
for  $t\in [0, T^{**}]$ that
\begin{align}\label{E12new}
&\|{w}\|_{\wt L^\infty_t(\dot B^{0}_{2,1})}+\|{w}\|_{L^1_t(\dot B^{2}_{2,1})}+\|{w}\|_{L^2_t(\dot B^{1}_{2,1})}\le C(\|(a_0,\q\u_0,{v}_0,{w}_0)\|_{\dot B^{0}_{2,1}}+ \|a_0\|_{\dot B^{1}_{2,1}}).
\end{align}
Taking the above inequality into \eqref{E9}, we can finally get
 \begin{align}\label{E15}
&\|(a,\q\u,{v})(t)\|_{\dot B^{0}_{2,1}}+\|a(t)\|_{\dot B^{  1}_{2,1}}
+\f12\int^t_0\big(\|(a^\ell,\q\u,{v})\|_{\dot B^{2}_{2,1}}+\|a^h\|_{\dot B^{1}_{2,1}}\big)\,d\tau\nonumber\\
&\quad\le C
(\|(a_0,\q\u_0,{v}_0)\|_{\dot B^{0}_{2,1}}\!\!+ \|a_0\|_{\dot B^{1}_{2,1}})
\exp\big( C(\|(a_0,\q\u_0,{v}_0,{w}_0)\|_{\dot B^{0}_{2,1}}\!\!+ \|a_0\|_{\dot B^{1}_{2,1}})^2\big).
\end{align}

Thus,
if the smallness condition \eqref{tiaojian} is satisfied, then  \eqref{E15} implies that
\begin{align*}
\|(a,\q\u,{v})\|_{\wt{L}^\infty_t(\dot{B}^{0}_{2,1})} +\|a\|_{\wt{L}^\infty_t(\dot{B}^{ 1}_{2,1})}
+\|(a^\ell,{v},\q\u)\|_{{L}^1_t(\dot{B}^{ 2}_{2,1})}  +\|a^h\|_{{L}^1_t(\dot{B}^{ 1}_{2,1})}   \leq \f{c_1}{2}
\end{align*}
for $t\leq T^{**}$. This   contradicts to   \eqref{E5}.
Hence  we conclude that $T^{**}=\infty$ and the proof of Theorem \ref{dingli} is complete.


\noindent \textbf{Acknowledgement.} This work is supported by NSFC under grant numbers 11601533 and 11571240.



\noindent

\end{document}